\documentclass[11pt,reqno]{amsart}
\usepackage[margin=1in]{geometry}
\usepackage{paralist}
\usepackage{tikz}

\theoremstyle{definition}
\newtheorem{definition}{Definition}[section]
\newtheorem{theorem}[definition]{Theorem}
\newtheorem{proposition}[definition]{Proposition}
\newtheorem{lemma}[definition]{Lemma}
\newtheorem{notation}[definition]{Notation}
\newtheorem{observation}[definition]{Observation}
\newtheorem{corollary}[definition]{Corollary}
\newtheorem{example}[definition]{Example}

\DeclareMathOperator{\hd}{HD}
\DeclareMathOperator{\pr}{Pr}

\newcommand{\dkw}{Dvoretzky-Kiefer-Wolfowitz}
\newcommand{\gc}{Glivenko-Cantelli}
\newcommand{\vc}{Vapnik-\v{C}hervonenkis}

\title{Error Probabilities for Halfspace Depth}
\author{Michael A. Burr}
\thanks{This work was partially supported by grants from the Simons Foundation (\#282399 to Michael Burr) and the NSF (\#CCF-1527193)}
\address{Department of Mathematical Sciences, Clemson University, Clemson, SC 29634}
\email{burr2@clemson.edu}
\author{Robert Fabrizio}
\address{Department of Mathematical Sciences, Clemson University, Clemson, SC 29634}
\email{rfabriz@clemson.edu}
\date{\today}

\keywords{Data depth, Halfspace depth, Convergence, Gilvenko-Cantelli}

\begin{document}
\begin{abstract}
Data depth functions are a generalization of one-dimensional order statistics and medians to real spaces of dimension greater than one; in particular, a data depth function quantifies the centrality of a point with respect to a data set or a probability distribution.  One of the most commonly studied data depth functions is halfspace depth.  It is of interest to computational geometers because it is highly geometric, and it is of interest to statisticians because it shares many desirable theoretical properties with the one-dimensional median.  As the sample size increases, the halfspace depth for a sample converges to the halfspace depth for the underlying distribution, almost surely.  In this paper, we use the geometry of halfspace depth to improve the explicit bounds on the rate of convergence.
\end{abstract}
\maketitle

\section{Introduction}
Data depth functions generalize order statistics and the median in one dimension to higher dimensions; in particular, they provide a quantitative estimate for the centrality of a point relative to a data set or a probability distribution (see \cite{LiuSingh:Survey:1999} and \cite{Liu:NewSurvey:2003} for surveys).  For these functions, larger values at a point indicate that the point is deeper or more central with respect to a data set or distribution.  The point with largest depth is defined to be the {\em median} of a data set or distribution.  For data sets, data depth functions are typically defined in terms of the geometry of the data, and they reflect the geometric combinatorics of the data set.  

Halfspace depth is a data depth measure that was first introduced by Hodges \cite{Hodges:1955} and Tukey \cite{Tukey:1975}.  Halfspace depth has attracted the interest of computational geometers because of its strong geometric properties (see, for example, \cite{Aloupis:2006} and \cite{Burr:2011}) and is of interest to statisticians because it shares many theoretical properties with the one-dimensional median \cite{ZuoSerfling:2000}.  We recall the definition of halfspace depth for distributions and data sets, briefly using the notation $\mathcal{H}(q)$ for the set of halfspaces in $\mathbb{R}^d$ containing $q\in\mathbb{R}^d$.
\begin{definition}
Let $X$ be an $\mathbb{R}^d$-valued random variable.  For a point $q\in\mathbb{R}^d$, the {\em halfspace depth} of $q$ with respect to $X$ is the following minimum over all halfspaces $H$ of $\mathbb{R}^d$ containing $q$:
$$
\hd(q;X)=\min_{H\in\mathcal{H}(q)}\pr(X\in H).
$$
Let $X^{(n)}=(X_1,\cdots,X_n)$ be a finite sample of $n$ points in $\mathbb{R}^d$.  For a point $q\in\mathbb{R}^d$, the {\em halfspace depth} of $q$ with respect to $X^{(n)}$ is the following minimum over all halfspaces $H$ of $\mathbb{R}^d$ containing $q$:
$$
\hd(q;X^{(n)})=n^{-1}\cdot\min_{H\in\mathcal{H}(q)}\#\{X^{(n)}\cap H\}.
$$
\end{definition}

In \cite{DonohoGasko:1992}, the authors prove that for iid $X_1$, $X_2$, $\ldots$ random variables equal in distribution to $X$, as $n\rightarrow\infty$
$$
\sup_{q\in\mathbb{R}^d}|\hd(q;X)-\hd(q;X^{(n)})|\rightarrow 0\qquad\text{a.s.}
$$
In this paper, we use the geometry of halfspace depth to improve explicit bounds on the rate of convergence of this limit.  In particular, we show:
\begin{theorem}\label{theorem:main:simplified}
Let $X$ be an $\mathbb{R}^d$-valued random variable (obeying certain Lipschitz continuity conditions defined below) and let $(X_1,X_2,\cdots)$ be a sequence of iid random variables, equal in distribution to $X$.  Fix $\varepsilon>0$.  Then, for $n$ sufficiently large, there is a constant $C$ such that 
$$
\pr\left(\sup_{q\in\mathbb{R}^d}|\hd(q;X)-\hd(q;X^{(n)})|\leq\varepsilon\right)\geq1-Cn^{\frac{3}{2}(d-1)}e^{-2n\varepsilon^2}.
$$
\end{theorem}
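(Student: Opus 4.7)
My strategy is to trade the continuous supremum $\sup_{q\in\mathbb{R}^d}$ for a union bound over a polynomially sized family of halfspaces, and then apply Hoeffding's inequality to each halfspace in that family. The Lipschitz hypothesis on $X$ plays two roles here: it localises the supremum to a compact region, and it controls the error incurred in replacing the continuous problem by a discrete one.

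In more detail, I would proceed in three steps. First, I would use the Lipschitz hypothesis to restrict attention to a bounded region $B\subset\mathbb{R}^d$: outside $B$ one has $\hd(q;X)<\varepsilon$ and, on a high-probability event, $\hd(q;X^{(n)})<\varepsilon$ as well, so the deviation is automatically within tolerance there. Second, I would exploit the key geometric fact that, in general position, $\hd(q;X^{(n)})$ is realised by a halfspace whose boundary hyperplane passes through $q$ together with exactly $d-1$ points of $X^{(n)}$; consequently the empirical minimiser always lies in a combinatorial family of size $O(n^{d-1})$, indexed by which $(d-1)$-subset of the sample lies on the boundary, and under the Lipschitz hypothesis the population minimiser $\arg\min_{H\ni q}\pr(X\in H)$ can be perturbed into the same family at cost negligible compared to $\varepsilon$ for large $n$. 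Third, I would place a net $\mathcal{N}$ of query points in $B$ at a scale $\delta$ chosen so that the Lipschitz error incurred in passing from $q$ to its nearest net point does not contaminate the Hoeffding exponent. The resulting finite family of halfspaces has cardinality $O(n^{3(d-1)/2})$, the factor $n^{(d-1)/2}$ arising from a net on a $(d-1)$-dimensional sub-object of $B$ (for instance the boundary of the population depth contour at level $\varepsilon$, where the supremum over $q$ can first be realised). Hoeffding applied to each halfspace $H$ of this family gives $\pr(|\pr(X\in H)-n^{-1}\#\{X^{(n)}\cap H\}|>\varepsilon)\leq 2e^{-2n\varepsilon^2}$, and a union bound together with the elementary inequality $|\min_i a_i-\min_i b_i|\leq\max_i|a_i-b_i|$ completes the argument.

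The main obstacle I expect is the delicate interplay of Lipschitz continuity and geometry in the third step. To keep the Hoeffding exponent exactly $2n\varepsilon^2$, the Lipschitz approximation must shift $\varepsilon$ by at most $o(1/n)$, which is stronger than the fixed-fractional slack tolerated in a standard \vc{}-type argument (the latter would give a polynomial factor $O(n^{d+1})$ and a weaker exponent $e^{-cn\varepsilon^2}$ with $c<2$). Achieving the improved polynomial exponent $3(d-1)/2$, rather than the $d/2$ that a naive $d$-dimensional net on $B$ would yield, requires a dimension-counting argument showing that the $q$'s where the supremum can plausibly be large live on a $(d-1)$-dimensional geometric object rather than on all of $B$. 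Reconciling these two requirements, so that $|\mathcal{N}|\cdot\binom{n}{d-1}=O(n^{3(d-1)/2})$ while the Lipschitz slack is absorbable into the constant $C$ for $n$ sufficiently large, is where the bulk of the proof's effort will go.
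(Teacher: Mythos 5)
Your approach is genuinely different from the paper's and, as sketched, has two gaps. The paper (Proposition~\ref{proposition:directions}) parametrizes halfspaces by their normal direction $\theta\in S^{d-1}$, covers the sphere with $O\bigl(n^{3(d-1)/2}\bigr)$ spherical caps (Lemma~\ref{lemma:covering}), and applies the \dkw{} inequality at each cap center; since DKW is already uniform in the threshold $t$, no combinatorial enumeration of halfspaces and no net of query points is required, and the exponent $-2n\varepsilon^2$ is inherited directly. You instead propose to discretize $q$ and to union-bound Hoeffding over the family of halfspaces passing through a net point $q$ and a $(d-1)$-subset of the sample. That family is random --- the halfspace through $q$ and $\{X_i:i\in S\}$ depends on the data $X_S$ --- so Hoeffding does not apply to its members as written and a direct union bound over the family is not legitimate. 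One can repair this by conditioning on $X_S$ and applying Hoeffding to the remaining $n-d+1$ points, but that shifts the exponent to $-2(n-d+1)\varepsilon'^{\,2}$, perturbs the empirical fraction by $O(d/n)$, and must then be re-inflated back to $e^{-2n\varepsilon^2}$; this bookkeeping is precisely the labour that DKW saves the paper, because DKW is already uniform over the data-dependent threshold.

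The second gap is the dimension count. You attribute a factor $n^{(d-1)/2}$ to a net on a $(d-1)$-dimensional sub-object of $B$, citing the boundary of a population depth contour, but the supremum over $q$ is not restricted to a single contour: one must also guarantee that $\hd(q;X^{(n)})$ is small for the $q$'s where $\hd(q;X)$ is small, and that event involves halfspaces whose bounding hyperplanes range over the whole sample, which forces you to work in a ball whose radius grows like $\sqrt{n}$ (this is exactly where the paper's decay-rate hypothesis enters, via the choice $R\sim\sqrt{n}$). A $q$-net at constant scale on a region of diameter $\Theta(\sqrt n)$ has $\Theta(n^{d/2})$ points, and combined with your $O(n^{d-1})$ combinatorial factor this gives $n^{(3d-2)/2}$, overshooting the target $n^{3(d-1)/2}$ by $\sqrt{n}$. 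The paper lands exactly on $\tfrac{3}{2}(d-1)$ because it covers the \emph{fixed-size} $(d-1)$-dimensional sphere $S^{d-1}$ with caps of radius $\sim\varepsilon\delta/(L_\theta+L_\pi R)$, which shrinks like $n^{-3/2}$ after $R\sim\sqrt n$ and $\delta=n^{-1}$ are chosen. Without passing to the direction parametrization and a one-dimensional uniform inequality such as DKW, I do not see how your plan reaches the stated polynomial with the stated exponential.
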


This theorem represents an improvement by a factor of $n^{\frac{1}{2}d+\frac{7}{2}}$ over the previous bound.  The remainder of this paper is organized as follows: In Section \ref{Section:HalfspaceBackground}, we provide the necessary background on halfspace depth and discuss what is known about its convergence.  In Section \ref{Section:Tools}, we quote results from computational geometry and probability theory that we need in the main results.  In this section, we also provide the notation for the Lipschitz continuity conditions mentioned in Theorem \ref{theorem:main:simplified}.  In Section \ref{Section:Examples}, we illustrate the main theorem with a few examples and special cases, and, finally, we conclude in Section \ref{section:Conclusion}.

\section{Background on Halfspace Depth and its Convergence}\label{Section:HalfspaceBackground}
We begin this section by presenting equivalent formulations of halfspace depth and discuss the geometric properties and notations that are used in the remainder of this paper.  In the second part of this paper, we recall probabilistic estimates on the error of the empirical measure of a sample and apply these estimates to halfspace depth.

\subsection{Equivalent Definitions of Halfspace Depth}\label{Section:Halfspace:Definitions}
Halfspace depth is a commonly studied and used data depth function because it is simple to define, can be computed efficiently, and satisfies all of the desirable properties for a data depth function as defined in \cite{ZuoSerfling:2000}.  In the context of this paper, we recall two equivalent formulations of halfspace depth.  

Observe, first, that a parallel translation of the boundary of a halfspace in the direction of the halfspace only decreases the measure or number of points in the halfspace.  Therefore, in the definition of halfspace depth, it is enough to only consider halfspaces where $q$ lies in the boundary of $H$.  Therefore, we can rewrite the definition of halfspace depth as follows:
\begin{proposition}\label{proposition:boundary:halfspace}
Let $X$ be an $\mathbb{R}^d$-valued random variable and $X^{(n)}=(X_1,\cdots,X_n)$ a finite sample of $n$ points in $\mathbb{R}^d$.  For a point $q\in\mathbb{R}^d$, the halfspace depth of $q$ with respect to $X$ or $X^{(n)}$ is the following minimum over all halfspaces $H$ of $\mathbb{R}^d$ whose boundary $\partial H$ contains $q$:
$$
\hd(q;X)=\min_{\partial H\ni q}\pr(X\in H)\qquad\text{and}\qquad\hd(q;X^{(n)})=n^{-1}\cdot\min_{\partial H\ni q}\#\{X^{(n)}\cap H\}.
$$
\end{proposition}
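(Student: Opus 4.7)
The plan is to show that the two minima (over halfspaces containing $q$ and over halfspaces whose boundary contains $q$) coincide, by establishing inequalities in both directions. Since halfspaces whose boundary contains $q$ form a subset of halfspaces that contain $q$, the minimum over the smaller collection is automatically at least as large as the minimum over the larger one. The content of the proposition is the reverse inequality, which is exactly the observation flagged in the paragraph preceding the statement.

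Concretely, I would fix a halfspace $H \in \mathcal{H}(q)$ and produce a halfspace $H'$ with $q \in \partial H'$ satisfying $H' \subseteq H$. To do this, write $H = \{x \in \mathbb{R}^d : \langle x,v\rangle \leq c\}$ for some unit vector $v$ and scalar $c$; since $q \in H$, we have $\langle q,v\rangle \leq c$. Define $H' = \{x : \langle x,v\rangle \leq \langle q,v\rangle\}$, which has the same outward normal as $H$ but whose boundary is translated parallel to $\partial H$ so that it passes through $q$. The inclusion $H' \subseteq H$ is immediate from $\langle q,v\rangle \leq c$, and by construction $q \in \partial H'$.

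From $H' \subseteq H$, monotonicity of probability gives $\pr(X \in H') \leq \pr(X \in H)$, and monotonicity of counting gives $\#\{X^{(n)} \cap H'\} \leq \#\{X^{(n)} \cap H\}$. Taking the minimum over $H \in \mathcal{H}(q)$ on the right and recognizing that each $H'$ produced on the left is a halfspace with $q \in \partial H'$ yields $\min_{\partial H \ni q}\pr(X \in H) \leq \min_{H \in \mathcal{H}(q)}\pr(X \in H)$ and the analogous inequality for the empirical version. Combined with the trivial reverse inequality, both formulations give the same value, which is the claim.

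There is no real obstacle here; the argument is essentially a one-line geometric observation (sliding the bounding hyperplane towards $q$ along its own normal can only shrink the halfspace and hence only decrease the measure or sample count) together with bookkeeping that handles the probability and counting cases in parallel. The only minor care needed is in the degenerate situation where $q$ already lies on $\partial H$, in which case $H' = H$ and the argument is vacuous.
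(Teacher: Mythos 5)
Your proof is correct and is essentially the same argument the paper gives: the paper's ``proof'' is just the sentence preceding the proposition, observing that sliding the bounding hyperplane parallel to itself toward $q$ shrinks the halfspace and so can only decrease the measure or count. You have simply written this out carefully in coordinates, which is a fine formalization of the same idea.
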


Next, we reinterpret this definition in terms of projections onto one-dimensional subspaces of $\mathbb{R}^d$ as in \cite{DonohoGasko:1992}; this allows us to interpret a high-dimensional problem as a collection of one-dimensional problems.  For $q\in\mathbb{R}^d$, the set of halfspaces $H$ with $q\in\partial H$ can be parametrized by points in the $(n-1)$-dimensional sphere, $S^{n-1}$.  More precisely, each direction in $S^{n-1}$ corresponds to a vector $v$ which describes a halfspace $H$ as follows: the bounding hyperplane $\partial H$ of the halfspace passes through $q$ and has normal $v$ and the halfspace opens in the direction away from $v$.

We now take this idea and look at it from a different perspective; instead of fixing the point $q$ and considering all halfspaces whose boundary passes through $q$, we focus on the parameterization given by $S^{d-1}$.  In order to simplify the discussion, we use the following notation:
\begin{notation}
Let $\theta\in S^{d-1}$, define $u_\theta$ to be the vector in $\mathbb{R}^d$ pointing in the direction of $\theta$, $\ell_\theta$ to be the line through the origin in the direction of $\theta$, and $\pi_\theta:\mathbb{R}^d\rightarrow\ell_\theta$ to be the orthogonal projection onto $\ell_\theta$.  Moreover, for a point $p\in \mathbb{R}^d$, define $d_\theta(p)=\langle p,u_\theta\rangle$ (where $\langle\cdot,\cdot\rangle$ is the standard inner product in $\mathbb{R}^d$) to be the signed length of $p$ in the direction of $u_\theta$.  In other words, $\pi_\theta(p)=d_\theta(p)u_\theta$.
\end{notation}

\noindent Using this notation, we define probability distributions and finite samples on $\mathbb{R}$ corresponding to each direction in $S^{d-1}$.

\begin{notation}
Let $X$ be an $\mathbb{R}^d$-valued random variable and $X^{(n)}=(X_1,\cdots,X_n)$ a finite sample of $n$ points in $\mathbb{R}^d$.  For each $\theta\in S^{n-1}$, $X_\theta$ is the $\mathbb{R}$-valued random variable $d_\theta(X)$, and $F_\theta$ is the cdf for this variable, i.e., $F_\theta(t)=\pr(X_\theta\leq t)$.  Similarly, $F_{n,\theta}$ is the empirical cdf for the points of $X^{(n)}$ in the direction of $\theta$, i.e., $F_{n,\theta}(t)=n^{-1}\cdot\#\{i:d_\theta(X_i)\leq t\}$.
\end{notation}

For each $t$ and $\theta$, there is a halfspace $H_{\theta,t}$ such that $F_\theta(t)=\pr(X\in H_{\theta,t})$ and $F_{n,\theta}(t)=n^{-1}\cdot\#\{X^{(n)}\cap H_{\theta,t}\}$.  In particular, $H_{\theta,t}$ is the halfspace whose bounding hyperplane passes through the point $t\cdot u_\theta$, whose bounding hyperplane has normal $u_\theta$, and the halfspace opens in the direction of $-u_\theta$.  Since the point $q$ lies in the bounding hyperplane for $H_{\theta,t}$ iff $d_\theta(q)=t$, we can reinterpret Proposition \ref{proposition:boundary:halfspace} as follows (see Figure \ref{figure:projections}):

\begin{proposition}\label{proposition:directions}
Let $X$ be an $\mathbb{R}^d$-valued random variable and $X^{(n)}=(X_1,\cdots,X_n)$ a finite sample of $n$ points in $\mathbb{R}^d$.  For a point $q\in\mathbb{R}^d$, the halfspace depth of $q$ with respect to $X$ or $X^{(n)}$ is the following minimum over directions $\theta\in S^{d-1}$:
$$
\hd(q;X)=\min_{\theta\in S^{d-1}}F_\theta(d_\theta(q))\qquad\text{and}\qquad\hd(q;X^{(n)})=\min_{\theta\in S^{d-1}}F_{n,\theta}(d_\theta(q)).
$$
\end{proposition}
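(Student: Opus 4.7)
The plan is to realize Proposition \ref{proposition:directions} as a reparametrization of Proposition \ref{proposition:boundary:halfspace}: the set of halfspaces $H$ with $q\in\partial H$ is put in bijection with $S^{d-1}$ via the outward unit normal, and under this bijection the quantity $\pr(X\in H)$ becomes $F_\theta(d_\theta(q))$, with the analogous identity for the empirical version.

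First, I would make the bijection precise. Given $\theta\in S^{d-1}$, consider the halfspace $H_{\theta,d_\theta(q)}$ defined in the notation above; its bounding hyperplane passes through $d_\theta(q)\,u_\theta$ with normal $u_\theta$, and since $d_\theta(q)=\langle q,u_\theta\rangle$, the point $q$ lies on this hyperplane. Conversely, any halfspace $H$ with $q\in\partial H$ determines a unique outward unit normal, which we write as $u_\theta$ for some $\theta\in S^{d-1}$; one then checks that $H=H_{\theta,d_\theta(q)}$. Thus $\theta\mapsto H_{\theta,d_\theta(q)}$ is a bijection from $S^{d-1}$ onto $\{H:q\in\partial H\}$.

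Second, the probability and counting values match under this bijection directly from the definitions. Since $H_{\theta,d_\theta(q)}$ opens in the direction of $-u_\theta$, a point $p$ lies in it precisely when $d_\theta(p)\leq d_\theta(q)$. Therefore
$$
\pr(X\in H_{\theta,d_\theta(q)})=\pr(d_\theta(X)\leq d_\theta(q))=\pr(X_\theta\leq d_\theta(q))=F_\theta(d_\theta(q)),
$$
and in the sample case
$$
n^{-1}\cdot\#\{X^{(n)}\cap H_{\theta,d_\theta(q)}\}=n^{-1}\cdot\#\{i:d_\theta(X_i)\leq d_\theta(q)\}=F_{n,\theta}(d_\theta(q)).
$$

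Finally, substituting these identities along with the reparametrization into the two formulas of Proposition \ref{proposition:boundary:halfspace} converts each minimum over $\{H:q\in\partial H\}$ into the corresponding minimum over $\theta\in S^{d-1}$, yielding the claimed equalities. I do not expect any genuine obstacle here; the proof is essentially bookkeeping, and the only subtlety is keeping the sign convention straight so that the halfspace $H_{\theta,t}$ (which by definition opens in the direction of $-u_\theta$) corresponds to the inequality $d_\theta(X)\leq t$ rather than $d_\theta(X)\geq t$.
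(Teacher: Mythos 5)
Your proposal is correct and mirrors the paper's own reasoning: the paper too parametrizes the halfspaces with $q$ on their boundary by the outward normal $\theta\in S^{d-1}$, notes that $H_{\theta,t}$ satisfies $F_\theta(t)=\pr(X\in H_{\theta,t})$ and $q\in\partial H_{\theta,t}$ iff $t=d_\theta(q)$, and then substitutes into Proposition \ref{proposition:boundary:halfspace}. The only difference is cosmetic — the paper presents this as a short discussion leading to the proposition rather than a formal proof, whereas you spell out the bijection and the sign convention explicitly.
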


\begin{figure}[hbt]
\begin{center}
\begin{tikzpicture}[scale=1.3]
\draw[color = lightgray](-1,0) -- (3,0);
\draw[color = lightgray](0,-1) -- (0,3);
\draw (0,.4)  node (C){};
\begin{scope}
\clip (-1,-1) -- (-1,3) -- (3.2,3) -- (3.2,-1) -- cycle;
\draw (0,2) node (B) {} ;
\begin{scope}[rotate=30]
\draw (-2,0) -- (3,0) node (A) {};
\draw (1.5,2.7) -- (1.5,-2);
\foreach \x in {0,0.2,...,6}
\draw (1.6,\x-3) -- (1.9,\x-2.7);
\draw[->,line width=2pt] (0,0) -- (1,0);
\filldraw[color = gray] (1.5,1.5) circle (.15);
\draw (1.35,1.5) node(D){};
\filldraw (1.5,0) circle (.15);
\draw (1.45,.2) node(E){};
\filldraw[color = gray] (-.3,1) circle (.15);
\draw (-.3,.85) node (F){};
\filldraw (-.3,0) circle (.15);
\draw (-.3,.15) node (G){};
\filldraw[color = gray] (2.3,1.8) circle (.15);
\draw (2.3,1.65) node (H){};
\filldraw (2.3,0) circle (.15);
\draw (2.3,.15) node (I){};
\filldraw[color = gray] (2.7,-1) circle (.15);
\draw (2.7,-.85) node (J){};
\filldraw (2.7,0) circle (.15);
\draw (2.7,-.15) node (K){};
\end{scope}
\end{scope}
\draw (A) node[right]{$\ell_\theta$};
\draw (B) node[left] {$H_{\theta,d_\theta(q)}$};
\draw (C) node[right] {$u_\theta$};
\draw (D) node[left] {$q$};
\draw (E) node[left]{$\pi_\theta(q)$};
\draw[dashed,->] (F)--(G);
\draw[dashed,->](H)--(I);
\draw[dashed,->](J)--(K);
\end{tikzpicture}
\end{center}
\caption{The number of points in the halfspace $H_{\theta,d_{\theta}(q)}$ (the unshaded halfspace) equals the number of points such that $d_\theta(X_i)\leq d_\theta(q)$.  In the diagram, these are the points such that $\pi_\theta(X_i)$ is to the left of $\pi_\theta(q)$.  A similar statement and diagram can be made for a $\mathbb{R}^d$-valued random variable $X$.\label{figure:projections}}
\end{figure}
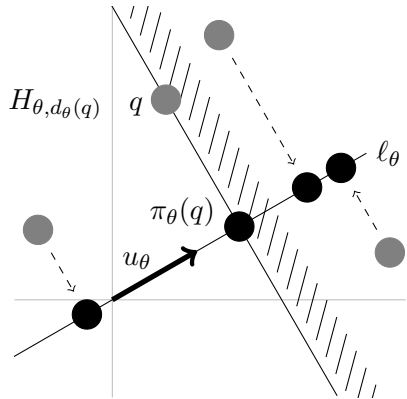

These equivalent formulations for halfspace depth illustrate that the halfspace depth of a point $q$ measures how extreme $q$ is under all orthogonal projections onto one-dimensional subspaces.  This formulation gives rise to the common description of the halfspace depth of a point $q$ relative to a sample $X^{(n)}$ as the smallest fraction of points that must be removed from the sample so that $q$ is outside the convex hull of the remaining sample points.  The formulation of halfspace depth in Proposition \ref{proposition:directions} gives the key approach that is exploited in the main result of this paper.

\subsection{Prior Convergence Estimates}

Suppose that $X$ is $\mathbb{R}^d$-valued random variable and that $(X_1,X_2,\cdots)$ is a sequence of iid random variables equal in distribution to $X$.  In \cite{DonohoGasko:1992}, the authors prove that for all $q\in\mathbb{R}^d$, $\hd(q;X^{(n)})\rightarrow\hd(q;X)$ almost surely as $n\rightarrow\infty$.  This result is proved by observing that the collection of all halfspaces in $\mathbb{R}^d$ satisfy the \gc\ property (for additional details, see \cite{ShorackWellner:Book:1986}), so that, uniformly for all halfspaces $H$, $n^{-1}\cdot\#\{X^{(n)}\cap H\}\rightarrow\pr(X\in H)$ a.s. as $n\rightarrow\infty$.

The convergence given by the \gc\ property can be strengthened by observing that the set of halfspaces are a \vc\ class.  In particular, it is shown in \cite{Dudley:1979} that halfspaces in $\mathbb{R}^d$ cannot shatter sets of size $d+2$ (and convex sets of size $d+1$ can be shattered).  We write $m(n)$ for the maximum number of subsets formed by intersecting finite samples of size $n$ with halfspaces in $\mathbb{R}^d$.  Then, in \cite{VapnikCervonenkis:1971} and \cite{Devroye:1982}, the following result is proved:
\begin{proposition}[See {\cite[Chapter 26]{ShorackWellner:Book:1986}}]\label{proposition:VC}
Let $\mathcal{H}$ be the set of all halfspaces in $\mathbb{R}^d$ and suppose that $\varepsilon>0$.  Define $m$ to be the function as described above.  Then, for $n$ sufficiently large, 
\begin{enumerate}
\item $\displaystyle\pr(\sup_{H\in\mathcal{H}}|n^{-1}\cdot\#\{X^{(n)}\cap H\}-\pr(X\in H)|\geq\varepsilon)\leq 4m(2n)e^{-n\varepsilon^2/8}$.\label{inequality:1}
\item $\displaystyle\pr(\sup_{H\in\mathcal{H}}|n^{-1}\cdot\#\{X^{(n)}\cap H\}-\pr(X\in H)|\geq\varepsilon)\leq 4m(n^2)e^{-2n\varepsilon^2}$.\label{inequality:2}
\end{enumerate}
Moreover, $m(r)\leq\frac{3}{2}\cdot\frac{r^{d+1}}{(d+1)!}$.
\end{proposition}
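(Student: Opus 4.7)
My plan is to prove the three assertions separately. Inequalities \eqref{inequality:1} and \eqref{inequality:2} are instances of the classical VC inequality, so I would derive them via the standard symmetrization-plus-growth-function argument, while the bound on $m(r)$ follows from Sauer-Shelah once I know the VC dimension of halfspaces.

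For the first two inequalities, I would introduce a ghost sample $X'^{(n)}=(X_1',\ldots,X_n')$ iid and equal in distribution to $X$, independent of $X^{(n)}$. A standard symmetrization lemma shows that, provided $n\varepsilon^2 \geq 2$,
$$
\pr\!\Bigl(\sup_{H\in\mathcal H}\bigl|n^{-1}\#\{X^{(n)}\cap H\}-\pr(X\in H)\bigr|\geq\varepsilon\Bigr)
\leq 2\,\pr\!\Bigl(\sup_{H\in\mathcal H}\bigl|n^{-1}\#\{X^{(n)}\cap H\}-n^{-1}\#\{X'^{(n)}\cap H\}\bigr|\geq\tfrac{\varepsilon}{2}\Bigr).
$$
Conditioning on the pooled sample $(X^{(n)},X'^{(n)})$ of size $2n$, the event on the right depends only on how each of the $2n$ points is labeled (real vs.\ ghost). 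Since there are at most $m(2n)$ halfspace-distinct behaviors on these $2n$ points, a union bound reduces the task to controlling, for each fixed halfspace $H$, the probability under a uniformly random assignment of signs; a Hoeffding inequality applied to the difference of empirical counts yields the factor $e^{-n\varepsilon^2/8}$, giving \eqref{inequality:1}. For \eqref{inequality:2}, I would instead use the sharper symmetrization-by-random-permutation of the $2n$ labels and apply the one-sided permutation Hoeffding bound; after pairing points optimally (which costs at most $m(n^2)$ distinct configurations over all choices of pairings of points, giving the $n^2$ in place of $2n$) one gets an $e^{-2n\varepsilon^2}$ tail, with a single factor $4$ absorbing the one-sided-to-two-sided and ghost-sample constants.

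For the growth-function bound, I would invoke the Sauer-Shelah lemma: if $\mathcal H$ has VC dimension at most $V$, then $m(r)\leq\sum_{i=0}^{V}\binom{r}{i}$. Halfspaces in $\mathbb R^d$ cannot shatter any set of $d+2$ points (by Radon's theorem, any such set admits a partition into two subsets whose convex hulls intersect, and no halfspace can separate them the ``wrong'' way), so $V\leq d+1$. Hence $m(r)\leq\sum_{i=0}^{d+1}\binom{r}{i}$, and for $r$ not too small the top term dominates: bounding $\binom{r}{i}\leq r^i/i!$ and then summing the geometric-like tail gives $\sum_{i=0}^{d+1}\binom{r}{i}\leq\tfrac{3}{2}\cdot\tfrac{r^{d+1}}{(d+1)!}$ once $r\geq 2(d+1)$, which is absorbed by the ``$n$ sufficiently large'' hypothesis.

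I expect the symmetrization step, and in particular extracting the stronger exponent $2n\varepsilon^2$ (rather than the cruder $n\varepsilon^2/8$) with only a polynomial prefactor of $m(n^2)$, to be the main obstacle: one must be careful in pairing the real and ghost samples and in bookkeeping the permutation versus Rademacher arguments, and the constants are sensitive to whether one symmetrizes once (giving the $1/8$) or exploits an auxiliary independent random pairing as in Devroye. The bound on $m(r)$ is routine provided one correctly invokes Radon to pin down the VC dimension; the only subtlety there is confirming the constant $\tfrac{3}{2}$, which requires the mild lower bound on $n$ in the hypothesis.
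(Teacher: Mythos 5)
This proposition is quoted from the literature without proof in the paper (the citation is to Shorack--Wellner, Ch.\ 26, drawing on Vapnik--\v{C}hervonenkis and on Devroye), so there is no in-paper argument to compare against; I can only judge your sketch against the standard proofs. Your outline for Inequality (1) is the correct and standard one: symmetrize against an independent ghost sample of size $n$ (valid once $n\varepsilon^2\geq 2$), condition on the pooled $2n$ points so the union bound runs over at most $m(2n)$ halfspace traces, and finish with Hoeffding to get the $e^{-n\varepsilon^2/8}$ factor and the prefactor $4 = 2\times 2$. Likewise, Radon's theorem to cap the VC dimension of halfspaces at $d+1$ and Sauer--Shelah to bound $m(r)$ by $\sum_{i=0}^{d+1}\binom{r}{i}$ is the right route, though there is a small numerical slip: bounding each term by $r^i/i!$ and comparing ratios of consecutive terms, the ratio is $(d+1)/r$, and $\sum_{j\geq 0}\bigl((d+1)/r\bigr)^j\leq\tfrac{3}{2}$ requires $(d+1)/r\leq\tfrac13$, i.e.\ $r\geq 3(d+1)$, not $r\geq 2(d+1)$ as you wrote (your hypothesis only gives the constant $2$). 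This is harmless under the ``$n$ sufficiently large'' clause, since $r$ is $2n$ or $n^2$.

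The genuine gap is your account of Inequality (2). You attribute the factor $m(n^2)$ to ``pairing points optimally \ldots\ over all choices of pairings of points,'' but that is not the mechanism in Devroye's refinement and it does not produce a count of $m(n^2)$ halfspace traces: with only $2n$ pooled points, no amount of pairing changes the fact that halfspaces realize at most $m(2n)$ traces on them. What Devroye actually does is symmetrize against a ghost sample of size $n'=n^2-n$, so that the pooled sample has $n^2$ points and hence $m(n^2)$ distinct halfspace restrictions; the point of taking $n'$ so large is that the first-stage symmetrization then costs only a bounded multiplicative constant instead of replacing $\varepsilon$ by $\varepsilon/2$, and a careful permutation (hypergeometric) concentration bound on the pooled sample can then deliver the sharp exponent $e^{-2n\varepsilon^2}$. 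Your pairing device neither yields the $n^2$ in the growth function nor explains why the exponent improves, so as sketched the proof of Inequality (2) does not go through; you would need to replace it with the large-ghost-sample argument.
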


We improve these bounds on the error by decreasing the exponent or the degree of the polynomial coefficient.

\section{Additional Tools}\label{Section:Tools}

Since data depth combines discrete geometry with probability and statistics, our main result requires results from both of these fields.  In this section, for convenience, we collect a few additional theorems and notations that are used in the remainder of the paper.

\subsection{Spherical Covering}

As observed in Section \ref{Section:Halfspace:Definitions}, the halfspace depth is related to the directions in a $(d-1)$-dimensional sphere.  In our main result, we consider small neighborhoods on a $(d-1)$-dimensional sphere.  The following result indicates how many of these neighborhoods are necessary.

\begin{lemma}[{\cite[Corollary 1.2]{Wintsche:2003}}]\label{lemma:covering}
For any $0<\psi<\arccos(d^{-1/2})$, there is an absolute constant $C_2$ such that the $(d-1)$-dimensional unit sphere can be covered by 
$$
C_2\frac{\cos\psi}{\sin^{d-1}\psi} (d-1)^{\frac{3}{2}}\ln\left(1+(d-1)\cos^2\psi\right)\leq C_2\left(\frac{\sqrt{d}}{\psi}\right)^{d-1}(d-1)^{\frac{3}{2}}\ln(d)
$$
spherical balls of radius $\psi$ (i.e., the radius on the surface of the sphere is $\psi$).
\end{lemma}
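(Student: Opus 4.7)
The plan is to follow the Rogers-style efficient covering strategy adapted to the sphere by Böröczky and Wintsche. The overarching idea is to reduce the spherical covering problem to a collection of Euclidean covering problems where Rogers' classical theorem applies: any convex body in $\mathbb{R}^n$ can be covered by translates of a given smaller convex body with density at most $n\ln n + O(n\ln\ln n)$. It is this logarithmic-in-dimension density that ultimately produces the $\ln\bigl(1+(d-1)\cos^2\psi\bigr)$ factor in the statement.

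Concretely, I would proceed as follows. First, I would compute the ratio $\sigma(S^{d-1})/\sigma(C(\psi))$, where $\sigma$ denotes surface measure and $C(\psi)$ is a spherical cap of angular radius $\psi$. An explicit integral gives $\sigma(C(\psi))=\omega_{d-2}\int_0^\psi \sin^{d-2}(t)\,dt$, where $\omega_{d-2}$ is the surface area of $S^{d-2}$; this accounts for the $\sin^{-(d-1)}\psi$ scale in the bound, since any covering must be at least this large on volumetric grounds. Second, I would partition the sphere into ``latitudinal zones'' at a controlled sequence of angles $\psi_k$, and on each zone use a central (gnomonic) projection to a tangent hyperplane. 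On each flat image I would then invoke Rogers' theorem to cover the projected zone by translates of a Euclidean ball, and lift the resulting cover back to the sphere. A local computation shows that the lifted images are contained in spherical caps of angular radius $\psi$; the $\cos\psi$ factor in the numerator appears because the Jacobian of the pull-back of the central projection scales as $\cos\psi$.

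The main obstacle, and the most delicate step, is obtaining the exact dimension dependence, namely the $(d-1)^{3/2}$ factor together with the precise logarithmic term $\ln\bigl(1+(d-1)\cos^2\psi\bigr)$. Producing $(d-1)^{3/2}$ requires Rogers' sharper form of economical covering (which carries an explicit $n^{3/2}$-type correction) rather than the crude $n\ln n$ version. The argument of the logarithm is particularly sensitive: it encodes the effective Euclidean dimension in which Rogers' theorem is applied, which in turn depends on how close $\psi$ is to the critical angle $\arccos(d^{-1/2})$ that bounds the hypothesis of the lemma. Controlling how many zones are needed, and ensuring their covers glue together without multiplying the count by a factor exponential in $d$, is where I would expect to spend most of the bookkeeping effort.

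Once the central inequality is in place, the right-hand bound $C_2(\sqrt{d}/\psi)^{d-1}(d-1)^{3/2}\ln(d)$ follows by routine estimates: $\sin\psi\geq (2/\pi)\psi$ on $[0,\pi/2]$, $\cos\psi\leq 1$, and $\ln\bigl(1+(d-1)\cos^2\psi\bigr)\leq\ln(d)$. These elementary simplifications are the only part that I would carry out by direct computation; the geometric heart of the argument lies in the Rogers-style zonal cover.
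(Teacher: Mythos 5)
The paper does not reprove Böröczky--Wintsche: the main covering bound is quoted directly as Corollary~1.2 of the cited reference, and the only content the paper actually establishes is the passage from the left-hand expression to the right-hand simplification. Your Rogers-style zonal sketch of the Wintsche result is a reasonable outline of how that theorem is proved, but it is admittedly incomplete and, more to the point, not something the paper attempts. The portion you \emph{do} need to verify is precisely the final inequality, and there your argument has a gap.

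You bound $\sin^{-(d-1)}\psi$ using Jordan's inequality $\sin\psi\geq(2/\pi)\psi$, which yields $\sin^{-(d-1)}\psi\leq(\pi/2)^{d-1}\psi^{-(d-1)}$. To obtain the stated $(\sqrt{d}/\psi)^{d-1}$ you would then need $(\pi/2)^{d-1}\leq d^{(d-1)/2}$, which already fails at $d=2$ (since $\pi/2>\sqrt{2}$), so your estimate does not produce the claimed right-hand side uniformly in $d$ with the same constant $C_2$. More importantly, this approach never uses the hypothesis $0<\psi<\arccos(d^{-1/2})$, which is exactly what the derivation hinges on. The paper's route is: $\tan\psi\geq\psi$ on $[0,\pi/2)$ gives $\sin\psi\geq\psi\cos\psi$, and the restriction $\psi<\arccos(d^{-1/2})$ gives $\cos\psi>d^{-1/2}$, so $\sin\psi>d^{-1/2}\psi$ and hence $\sin^{-(d-1)}\psi<(\sqrt{d}/\psi)^{d-1}$; together with $\cos\psi\leq1$ and $\ln(1+(d-1)\cos^2\psi)\leq\ln d$ this gives the right-hand side exactly. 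If you replace Jordan's inequality by this hypothesis-driven bound, your derivation of the simplification becomes correct; the rest of your discussion, while a faithful sketch of the covering theorem's proof strategy, is supplementary to what the paper proves here.
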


The final inequality in this lemma follows from the facts that $\cos(\psi)\leq 1$ and for $0<\psi<\arccos(d^{-1/2})$, $\sin(\psi)\geq\psi\cdot\cos(\psi)>d^{-1/2}\psi$.  The $\sqrt{d}$ could be replaced by a constant by using a better bound on $\sin(x)$, such as $\sin(x)\geq x-\frac{x^3}{3!}$.  We leave the details to the interested reader.

\subsection{The \dkw\ Ineqaulity}

In the one-dimensional case, the bounds in Proposition \ref{proposition:VC} can be improved with the \dkw\ inequality.
\begin{lemma}[See, for example, \cite{DevroyeLugosi:2001}]\label{lemma:DKW}
Let $X$ be an $\mathbb{R}$-valued random variable, and let $(X_1,X_2,\cdots)$ be a sequence of iid random variables equal in distribution to $X$.  Let $F$ be the cdf of $X$, and let $F_n$ be the empirical cdf, i.e., $F_n(t)=n^{-1}\cdot\#\{i\leq n:X_i\leq t\}$.  For each $\varepsilon>0$,
$$
\pr(\sup_{t\in\mathbb{R}}|F(t)-F_n(t)|\geq\varepsilon)\leq 2e^{-2n\varepsilon^2}.
$$
In this paper, we use the \dkw\ inequality on one-dimensional projections, as in Proposition \ref{proposition:directions}, in order to improve the bound on the convergence rate of the sample halfspace depth.
\end{lemma}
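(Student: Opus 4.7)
The plan is as follows. First, I would reduce to the uniform case via the probability integral transform: if $F$ is continuous (the general case follows by a monotone limiting argument or a small perturbation), then the variables $U_i = F(X_i)$ are iid uniform on $[0,1]$, and one checks that $\sup_{t\in\mathbb{R}}|F_n(t)-F(t)|$ equals $\sup_{u\in[0,1]}|G_n(u)-u|$, where $G_n$ is the empirical cdf of $U_1,\ldots,U_n$. This step removes any dependence on the underlying distribution and converts the problem into one about the uniform empirical process on $[0,1]$.

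Next, I would split the two-sided supremum via
\[
\pr\bigl(\sup_u |G_n(u)-u|\geq\varepsilon\bigr)\leq\pr\bigl(\sup_u (G_n(u)-u)\geq\varepsilon\bigr)+\pr\bigl(\sup_u (u-G_n(u))\geq\varepsilon\bigr),
\]
so it suffices to bound each one-sided supremum by $e^{-2n\varepsilon^2}$; the factor of $2$ in the statement then emerges from the union bound.

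For each one-sided bound, I would use an exponential martingale argument. Fixing $\lambda>0$ and writing $nG_n(u)=\sum_i\mathbf{1}_{U_i\leq u}$, the process $\exp(\lambda(nG_n(u)-nu))$ has a natural supermartingale structure in $u$, so Doob's maximal inequality followed by optimization in $\lambda$ produces a subgaussian tail bound of the correct form. An equivalent route is to condition on the order statistics $U_{(k)}$ and exploit the ballot-type combinatorics of the empirical process to reduce to a one-dimensional Chernoff estimate.

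The main obstacle is extracting the sharp constant $2$ in the exponent. This is Massart's refinement of the original Dvoretzky-Kiefer-Wolfowitz bound, and it requires a delicate induction on $n$ (or an equivalent reflection argument in the spirit of the ballot problem) to avoid any multiplicative loss when passing from pointwise concentration to the supremum. If one is content with a worse constant such as $1/2$, the argument simplifies considerably: one applies Hoeffding's inequality at a sufficiently fine grid of points in $[0,1]$ and uses the monotonicity of $G_n$ and of $u\mapsto u$ to control the error between grid points, but the resulting bound falls short of the optimal constant that is actually needed in our main theorem to obtain the advertised $e^{-2n\varepsilon^2}$ rate.
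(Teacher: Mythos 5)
The paper does not prove this lemma; it is quoted as a known classical result (the Dvoretzky--Kiefer--Wolfowitz inequality with Massart's sharp constant), with a pointer to \cite{DevroyeLugosi:2001}. So there is no internal proof to compare against, and the right ``proof'' here is simply the citation.

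That said, taking your sketch on its own terms: the reduction to the uniform empirical process and the union bound over the two one-sided deviations are standard and correct (one should note that the one-sided inequality $\pr(\sup_u(G_n(u)-u)\geq\varepsilon)\leq e^{-2n\varepsilon^2}$ only holds for $\varepsilon\geq\sqrt{\ln 2/(2n)}$, but for smaller $\varepsilon$ the two-sided bound $2e^{-2n\varepsilon^2}\geq1$ is vacuous, so the conclusion is unaffected). The genuine gap is exactly the one you flag and then leave open: the exponential-supermartingale/Doob route you describe does not deliver the constant $2$ in the exponent. The process $u\mapsto nG_n(u)$ is not a L\'evy process (its jump intensity depends on the current state), so the optional-stopping/maximal-inequality machinery produces at best a Hoeffding-type bound with a worse constant or an extraneous polynomial prefactor. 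Massart's actual argument does not go through a martingale maximal inequality at all: it starts from the exact Birnbaum--Tingey (ballot-type) formula for the one-sided Kolmogorov--Smirnov statistic and then performs delicate term-by-term estimates on that sum. So your outline correctly identifies the landscape, but the step that carries all the weight --- obtaining $e^{-2n\varepsilon^2}$ rather than something weaker --- is asserted rather than proved, and the method you propose for it is not the one that actually succeeds. Since the main theorem of the paper genuinely needs the sharp constant (the whole point of using DKW instead of the generic VC bound in Proposition \ref{proposition:VC} is the $e^{-2n\varepsilon^2}$ rate), this gap would matter if you were required to supply a self-contained proof; as the paper cites the result, it does not.
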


\subsection{Lipschitz and Tail Behavior}

In this section, we define a few conditions on the probability distributions that we use for our main result.  Let $X$ be an $\mathbb{R}^d$-valued random variable with $d>1$.

\begin{definition}\label{definition:decaysquickly}
We say that an $\mathbb{R}^d$-valued random variable $X$ {\em decays quickly} if there is some $\lambda>0$ such that $\pr(|X|>R)=O\left(R^{3d-5}\cdot e^{-\lambda \frac{R^2}{2}}\right)$.  In other words, there exists a constant $C_1>0$ such that for $R>1$, $\pr(|X|>R)<C_1\cdot R^{3d-5}\cdot e^{-\lambda \frac{R^2}{2}}$.  We say that $\lambda$ is the {\em decay rate} of $X$.
\end{definition}

For example, the multivariate normal with the origin the mean and covariance matrix $I$ decays exponentially with $\lambda=1$.

\begin{definition}
We say that $X$ is {\em Lipschitz continuous in projection} if there is a constant $L_{\pi}>0$ such that for all $\theta\in S^{d-1}$, $F_{\theta}$ is Lipschitz continuous with Lipschitz constant $L_{\pi}$.  We say that $X$ is {\em radially Lipschitz continuous} if there is a constant $L_\theta$ such that for any fixed $t$, the function $F_\theta(t)$ is Lipschitz continuous as a function of $\theta$ on the unit circle with Lipschitz constant $L_\theta$.  We say that $L_\pi$ is the {\em projection Lipschitz constant} and $L_\theta$ is the {\em radial Lipschitz constant} for $X$.
\end{definition}

For example, radially symmetric distributions about the origin have $L_\theta=0$.  These two Lipschitz constants indicate that as $\theta$ and $t$ change, $F_\theta(t)$ changes continuously.  In other words, these two constants can be used to bound the difference in the probability between two halfspaces and to show that this difference varies continuously.

\section{Main Result}\label{section:MainResult}
We begin this section by highlighting the difference between the previous bounds and our approach.  In Proposition \ref{proposition:VC}, the convergence rates are computed over all halfspaces in $\mathbb{R}^d$, and the goal is to find a bound on 
$$
\pr(\sup_{H\in\mathcal{H}}|n^{-1}\cdot\#\{X^{(n)}\cap H\}-\pr(X\in H)|\geq\varepsilon).
$$
In our approach, however, the goal is to find bounds on the family of one-dimensional cdfs $F_\theta$ and $F_{n,\theta}$.  In particular, in our main result, we find a uniform upper bound on the following probability:
$$
\pr(\sup_{\substack{t\in\mathbb{R}\\\theta\in S^{d-1}}}|F_\theta(t)-F_{n,\theta}(t)|\geq\varepsilon).
$$
Then, by applying Proposition \ref{proposition:directions}, when $|F_\theta(t)-F_{n,\theta}(t)|<\varepsilon$ for all $t$ and $\theta$, it follows that for all $q\in\mathbb{R}^d$, $|\hd(q;X)-\hd(q;X^{(n)})|<\varepsilon$.  The advantage of this approach is that by considering only one-dimensional objects, we can use the improved bounds of the \dkw\ inequality.

Throughout the remainder of this section, we assume that the $\mathbb{R}^d$-valued random variable $X$ with $d>1$ has decay rate $\lambda$ with constant $C_1$, projection Lipschitz constant $L_\pi$, and radial Lipschitz constant $L_\theta$.  Also, let $(X_1,X_2,\cdots)$ be a sequence of iid random variables equal in distribution to $X$.

Our proof follows the following steps: \begin{inparaenum}
\item First, we quantify how small changes in $\theta$ affect the cdfs $F_\theta$ and $F_{n,\theta}$.
\item Second, we use this description on $F_\theta$ and $F_{n,\theta}$ to argue that it is sufficient to examine finitely many $\theta$'s instead of the infinitely many possible $\theta$'s in $S^{d-1}$.
\item Finally, we carefully choose a few parameters in order to achieve a bound which improves upon the bounds achieved in Proposition \ref{proposition:VC}.
\end{inparaenum}

For our first step, we begin by collecting, for later convenience, a consequence of the Lipschitz condition, and then provide a geometric description of the behavior of $d_\theta$ as $\theta$ changes.  For notational convenience, for $\theta,\varphi\in S^{d-1}$, we write $|\theta-\varphi|$ for the spherical distance between them, i.e., the central angle between the two points.
\begin{observation}\label{observation:Lipschitz}
For all $\theta,\varphi\in S^{d-1}$ and $t\in\mathbb{R}$, $|F_\theta(t)-F_\varphi(t)|\leq L_\theta|\theta-\varphi|$.
\end{observation}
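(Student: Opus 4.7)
The proof is essentially a one-line appeal to the definition of radial Lipschitz continuity, rephrased in the new notation $|\theta-\varphi|$ for spherical distance. By hypothesis, $X$ is radially Lipschitz continuous with constant $L_\theta$, meaning that for each fixed $t\in\mathbb{R}$, the map $\theta\mapsto F_\theta(t)$ is Lipschitz on $S^{d-1}$ with constant $L_\theta$, uniformly in $t$. The only identification to make is that the metric on $S^{d-1}$ implicit in that definition is the geodesic (central-angle) distance, which is precisely what the notation $|\theta-\varphi|$ introduced just before the statement refers to.

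My plan is therefore: first, recall the notational convention that $|\theta-\varphi|$ denotes the central angle between $\theta$ and $\varphi$; second, fix an arbitrary $t\in\mathbb{R}$ and invoke the Lipschitz bound on $\theta\mapsto F_\theta(t)$ at the pair $(\theta,\varphi)$; the inequality $|F_\theta(t)-F_\varphi(t)|\leq L_\theta|\theta-\varphi|$ then falls out immediately. No geometric or analytic work is needed beyond making the identification explicit, and since the bound in the definition is uniform in $t$, no further justification for uniformity is required.

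I do not anticipate any real obstacle here. The observation is recorded separately from the definition purely as a convenience, so that it can be cited cleanly in the next stage of the argument, where $S^{d-1}$ is discretized via Lemma \ref{lemma:covering} and one needs to control $F_\theta$ as $\theta$ varies within a single spherical ball of radius $\psi$. At that point, combining this bound with the analogous control on $F_{n,\theta}$ (to be established next) will let one reduce the problem from all $\theta\in S^{d-1}$ to a finite net, which is the genuinely substantive step; the present observation is merely laying in one of the two ingredients needed for that reduction.
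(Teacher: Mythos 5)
Your proposal is correct and matches the paper's intent exactly: the Observation is stated without proof precisely because it is nothing more than the definition of radial Lipschitz continuity rewritten with the notation $|\theta-\varphi|$ for central-angle distance, which is the identification you make explicit.
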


\begin{lemma}\label{lemma:geometric:change}
Let  $\theta,\varphi\in S^{d-1}$ and $x\in\mathbb{R}^d$.  Then, $|d_\theta(x)-d_\varphi(x)|\leq\|x\|\cdot|\theta-\varphi|$.
\end{lemma}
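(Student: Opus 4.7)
The plan is to reduce the claim, via Cauchy--Schwarz, to the standard fact that on the unit sphere the Euclidean chord length is bounded above by the spherical arc length. Writing everything in terms of the notation already introduced, $d_\theta(x)=\langle x,u_\theta\rangle$ and $d_\varphi(x)=\langle x,u_\varphi\rangle$, so
\[
d_\theta(x)-d_\varphi(x)=\langle x,\,u_\theta-u_\varphi\rangle,
\]
and Cauchy--Schwarz immediately yields $|d_\theta(x)-d_\varphi(x)|\leq\|x\|\cdot\|u_\theta-u_\varphi\|$. Thus the whole proof reduces to showing the chord-length bound $\|u_\theta-u_\varphi\|\leq|\theta-\varphi|$.

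For this chord-length bound, I would let $\alpha=|\theta-\varphi|$ denote the central angle between the two unit vectors (this is precisely the spherical distance notation introduced just before the lemma). A direct calculation in the two-dimensional plane spanned by $u_\theta$ and $u_\varphi$ gives
\[
\|u_\theta-u_\varphi\|^2=2-2\cos\alpha=4\sin^2(\alpha/2),
\]
so $\|u_\theta-u_\varphi\|=2\sin(\alpha/2)$. The elementary inequality $\sin t\leq t$ for $t\geq 0$ then gives $2\sin(\alpha/2)\leq\alpha$, i.e., $\|u_\theta-u_\varphi\|\leq|\theta-\varphi|$, as needed.

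Putting the two estimates together yields $|d_\theta(x)-d_\varphi(x)|\leq\|x\|\cdot|\theta-\varphi|$. There is no real obstacle here; the only thing to be careful about is the convention that $|\theta-\varphi|$ denotes the spherical (geodesic) distance rather than the Euclidean distance between the corresponding points of $S^{d-1}$, so the chord-vs-arc inequality is exactly the content one needs, and it holds in all dimensions because the two points together with the origin span a two-dimensional subspace in which the computation takes place.
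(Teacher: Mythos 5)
Your proof is correct and follows essentially the same route as the paper: both apply Cauchy--Schwarz to reduce to $\|u_\theta-u_\varphi\|\le|\theta-\varphi|$, and then invoke the chord-versus-arc inequality. The only difference is that you prove the chord bound explicitly via $\|u_\theta-u_\varphi\|=2\sin(|\theta-\varphi|/2)\le|\theta-\varphi|$, whereas the paper simply cites that chords are shorter than the arcs they subtend.
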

\begin{proof}
Observe first that $|d_\theta(x)-d_\varphi(x)|=|\langle x,u_\theta-u_\varphi\rangle|\leq\|x\|\cdot\|u_\theta-u_\varphi\|$.  Note that the vector $u_\theta-u_\varphi$ is the chord of an arc on the great circle between $\theta$ and $\varphi$; since chords are shorter than the arcs they cut off, and the length of the arc is $|\theta-\varphi|$, the result follows.
\end{proof}

\noindent Moreover, Lemma \ref{lemma:geometric:change} allows us to bound $F_{n,\varphi}$ in terms of $F_{n,\theta}$.  More precisely:
\begin{corollary}\label{corollary:inequalities}
Fix $R>0$ and $\theta,\varphi\in S^{d-1}$.  Suppose that for all $i\leq n$, $\|X_i\|\leq R$.  Then,
$$
F_{n,\theta}(t-R|\theta-\varphi|)\leq F_{n,\varphi}(t)\leq F_{n,\theta}(t+R|\theta-\varphi|).
$$
\end{corollary}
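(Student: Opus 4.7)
The plan is to unpack the definition of the empirical cdf and apply Lemma \ref{lemma:geometric:change} pointwise to each sample point, using the uniform bound $\|X_i\| \leq R$ to get a bound that is independent of $i$.

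First I would fix $\theta, \varphi \in S^{d-1}$ and observe that by Lemma \ref{lemma:geometric:change}, for every sample point $X_i$ we have
$$|d_\theta(X_i) - d_\varphi(X_i)| \leq \|X_i\| \cdot |\theta - \varphi| \leq R|\theta - \varphi|,$$
where the second inequality uses the hypothesis $\|X_i\| \leq R$. This gives the two-sided pointwise estimate
$$d_\varphi(X_i) - R|\theta - \varphi| \leq d_\theta(X_i) \leq d_\varphi(X_i) + R|\theta - \varphi|.$$

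Next I would translate these pointwise inequalities into containments of index sets. For the right-hand inequality of the corollary, if $d_\varphi(X_i) \leq t$, then the estimate above forces $d_\theta(X_i) \leq t + R|\theta - \varphi|$, so $\{i : d_\varphi(X_i) \leq t\} \subseteq \{i : d_\theta(X_i) \leq t + R|\theta-\varphi|\}$, and dividing cardinalities by $n$ yields $F_{n,\varphi}(t) \leq F_{n,\theta}(t + R|\theta - \varphi|)$. For the left-hand inequality, if $d_\theta(X_i) \leq t - R|\theta-\varphi|$, then $d_\varphi(X_i) \leq d_\theta(X_i) + R|\theta-\varphi| \leq t$, giving the reverse containment and hence $F_{n,\theta}(t - R|\theta-\varphi|) \leq F_{n,\varphi}(t)$.

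There is no real obstacle here — the corollary is a direct bookkeeping consequence of Lemma \ref{lemma:geometric:change} combined with the uniform tail bound $\|X_i\| \leq R$. The only subtlety worth flagging is that the bound $R|\theta-\varphi|$ replacing $\|X_i\|\cdot|\theta-\varphi|$ is what makes the shift amount uniform in $i$, which is exactly what is needed to pull the shift outside the empirical cdf.
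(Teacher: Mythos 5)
Your proof is correct and follows essentially the same route as the paper's: apply Lemma \ref{lemma:geometric:change} pointwise to each $X_i$, use $\|X_i\|\leq R$ to make the shift uniform, and translate the implications $d_\varphi(X_i)\leq t \Rightarrow d_\theta(X_i)\leq t+R|\theta-\varphi|$ and $d_\theta(X_i)\leq t-R|\theta-\varphi| \Rightarrow d_\varphi(X_i)\leq t$ into inequalities between the empirical cdfs. The only difference is that you spell out the index-set containments explicitly, which the paper leaves implicit.
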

\begin{proof}
This result follows from Lemma \ref{lemma:geometric:change}, because we know that if $d_{\varphi}(X_i)\leq t$, then $d_{\theta}(X_i)\leq t+R\cdot|\theta-\varphi|$, and if $d_{\theta}(X_i)\leq t-R\cdot|\theta-\varphi|$, then $d_{\varphi}(X_i)\leq t$.  
\end{proof}

With Observation \ref{observation:Lipschitz} and Corollary \ref{corollary:inequalities} in hand, we know how the distributions $F_\theta$ and $F_{n,\theta}$ vary as $\theta$ varies.  This leads to the following proposition, which bounds the error between $F_\theta$ and $F_{n,\theta}$ as $\theta$ varies.

\begin{proposition}
Fix $R>0$, $\varepsilon>0$, $\delta>0$, and $\theta,\varphi\in S^{d-1}$.  Suppose that for all $i\leq n$, $\|X_i\|\leq R$.  Suppose that $\sup_{t\in\mathbb{R}}|F_\theta(t)-F_{n,\theta}(t)|<\varepsilon\cdot (1+\delta)^{-1}$.  Then, $$\sup_{t\in\mathbb{R}}|F_{\varphi}(t)-F_{n,\varphi}(t)|<\varepsilon(1+\delta)^{-1}+L_\theta|\theta-\varphi|+L_\pi R|\theta-\varphi|.$$
\end{proposition}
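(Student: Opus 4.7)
The plan is to bound $|F_\varphi(t)-F_{n,\varphi}(t)|$ by inserting comparison functions evaluated at shifted arguments and then applying the three available inequalities: the hypothesized bound on $\sup_t|F_\theta(t)-F_{n,\theta}(t)|$, the projection Lipschitz constant $L_\pi$ (to compare $F_\theta$ at two nearby $t$-values), and the radial Lipschitz bound of Observation \ref{observation:Lipschitz} (to compare $F_\theta$ and $F_\varphi$ at the same $t$). Corollary \ref{corollary:inequalities} is the bridge that lets us trade $F_{n,\varphi}(t)$ for $F_{n,\theta}$ evaluated at $t\pm R|\theta-\varphi|$, which is essential since the only empirical bound we have is on the $F_{n,\theta}$ side.

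First, I would fix $t\in\mathbb{R}$ and treat the two directions of the inequality separately. For the upper direction, apply Corollary \ref{corollary:inequalities} to get $F_{n,\varphi}(t)\leq F_{n,\theta}(t+R|\theta-\varphi|)$, and then write
\[
F_{n,\varphi}(t)-F_\varphi(t) \leq \bigl[F_{n,\theta}(s)-F_\theta(s)\bigr] + \bigl[F_\theta(s)-F_\theta(t)\bigr] + \bigl[F_\theta(t)-F_\varphi(t)\bigr],
\]
where $s := t+R|\theta-\varphi|$. The first bracket is $<\varepsilon(1+\delta)^{-1}$ by hypothesis (taking sup over $t\in\mathbb{R}$), the second is at most $L_\pi|s-t|=L_\pi R|\theta-\varphi|$ by the projection Lipschitz condition, and the third is at most $L_\theta|\theta-\varphi|$ by Observation \ref{observation:Lipschitz}.

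For the lower direction I would use $F_{n,\theta}(t-R|\theta-\varphi|)\leq F_{n,\varphi}(t)$ from Corollary \ref{corollary:inequalities}, and then perform the mirror-image decomposition with $s' := t-R|\theta-\varphi|$:
\[
F_\varphi(t)-F_{n,\varphi}(t) \leq \bigl[F_\varphi(t)-F_\theta(t)\bigr] + \bigl[F_\theta(t)-F_\theta(s')\bigr] + \bigl[F_\theta(s')-F_{n,\theta}(s')\bigr],
\]
and the same three estimates apply. Combining the two directions and taking the supremum over $t$ yields the claimed bound.

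This argument is entirely mechanical once the decomposition is set up; there is no genuine obstacle. The only mild subtlety worth noting is that the hypothesis is a \emph{strict} inequality on a supremum, so adding the (nonnegative) Lipschitz contributions preserves strictness in the conclusion. The shift of $R|\theta-\varphi|$ inside $F_\theta$ is the whole reason the $L_\pi R|\theta-\varphi|$ term appears, and the radial motion of $\theta$ itself produces the $L_\theta|\theta-\varphi|$ term; the proposition exactly records the cost of swapping the direction $\theta$ for $\varphi$.
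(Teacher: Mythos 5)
Your proof is correct and matches the paper's proof in all essentials: both use Corollary \ref{corollary:inequalities} to replace $F_{n,\varphi}(t)$ by $F_{n,\theta}$ evaluated at $t\pm R|\theta-\varphi|$, and then apply the same three-term triangle-inequality decomposition controlled by the hypothesis, the projection Lipschitz constant, and Observation \ref{observation:Lipschitz}. The only cosmetic difference is that you treat the two one-sided inequalities separately, while the paper first passes to the absolute value and bounds a maximum of two terms.
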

\begin{proof}
By Corollary \ref{corollary:inequalities}, we know that 
$$
F_{n,\theta}(t-R|\theta-\varphi|)-F_{\varphi}(t)\leq F_{n,\varphi}(t)-F_{\varphi}(t)\leq F_{n,\theta}(t+R|\theta-\varphi|)-F_{\varphi}(t).
$$
Then, it follows that
$$
|F_{n,\varphi}(t)-F_{\varphi}(t)|\leq\max\{|F_{n,\theta}(t-R|\theta-\varphi|)-F_{\varphi}(t)|,|F_{n,\theta}(t+R|\theta-\varphi|)-F_{\varphi}(t)|\}.
$$
We proceed to bound the first of the expression in the maximum (the other expression is similar).  By the triangle inequality, 
\begin{align*}
|F_{n,\theta}&(t-R|\theta-\varphi|)-F_\varphi(t)|\\&\leq |F_{n,\theta}(t-R|\theta-\varphi|)-F_\theta(t-R|\theta-\varphi|)|+|F_\theta(t-R|\theta-\varphi|)-F_\theta(t)|+|F_\theta(t)-F_{\varphi}(t)|.\end{align*}
By assumption, the first expression is bounded above by $\varepsilon\cdot(1+\delta)^{-1}$.  By the projection Lipschitz constant, the second expression is bounded above by $L_\pi\cdot R\cdot |\theta-\varphi|$.  Finally, by Observation \ref{observation:Lipschitz}, the third expression is bounded by $L_\theta\cdot|\theta-\varphi|$.  Combining these bounds, the result follows.
\end{proof}

\noindent By choosing $|\theta-\varphi|$ to be sufficiently small, as in the following corollary, we can insist that the errors are all bounded above by $\varepsilon$. 

\begin{corollary}\label{corollary:neighborhoods}
Fix $R>0$, $\varepsilon>0$, $\delta>0$, and $\theta\in S^{d-1}$.  Suppose that for all $i\leq n$, $\|X_i\|\leq R$.  Let $\varphi\in S^{d-1}$ be such that $|\theta-\varphi|<\varepsilon\cdot\delta\cdot(1+\delta)^{-1}\cdot(L_\theta+L_\pi\cdot R)^{-1}$.  Suppose that $\sup_{t\in\mathbb{R}}|F_\theta(t)-F_{n,\theta}(t)|<\varepsilon\cdot (1+\delta)^{-1}$.  Then, $\sup_{t\in\mathbb{R}}|F_{\varphi}(t)-F_{n,\varphi}(t)|<\varepsilon.$
\end{corollary}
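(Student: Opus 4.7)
The plan is to apply the preceding proposition verbatim and then verify that the particular choice of $|\theta-\varphi|$ forces the resulting bound to collapse to $\varepsilon$. The hypotheses of the corollary are exactly the hypotheses of the proposition (sample radius bounded by $R$, and $\sup_t|F_\theta(t)-F_{n,\theta}(t)|<\varepsilon(1+\delta)^{-1}$), so the proposition yields
$$\sup_{t\in\mathbb{R}}|F_\varphi(t)-F_{n,\varphi}(t)|<\varepsilon(1+\delta)^{-1}+(L_\theta+L_\pi R)|\theta-\varphi|.$$
With the assumed bound on $|\theta-\varphi|$, the second summand is strictly less than $\varepsilon\delta(1+\delta)^{-1}$, and the two terms sum to $\varepsilon(1+\delta)(1+\delta)^{-1}=\varepsilon$.

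The argument is purely an algebraic rearrangement and presents no real obstacle; the quantity $\varepsilon\delta(1+\delta)^{-1}(L_\theta+L_\pi R)^{-1}$ has manifestly been engineered to balance the two contributions in the proposition. The only implicit restriction is $L_\theta+L_\pi R>0$, which fails only in the degenerate case where $F_\theta$ is constant in both arguments; there the conclusion holds vacuously. This corollary is the clean, single-parameter repackaging of the proposition that will plug directly into the spherical covering step of the main theorem, where $|\theta-\varphi|$ will play the role of the covering radius $\psi$ from Lemma \ref{lemma:covering}.
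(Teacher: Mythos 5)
Your proof is correct and matches the paper's intended argument: the corollary is an immediate consequence of the preceding proposition, obtained by substituting the hypothesized bound on $|\theta-\varphi|$ so that $(L_\theta+L_\pi R)|\theta-\varphi|<\varepsilon\delta(1+\delta)^{-1}$, which adds to $\varepsilon(1+\delta)^{-1}$ to give $\varepsilon$. The paper gives no explicit proof for exactly this reason, and your observation about the degenerate case $L_\theta+L_\pi R=0$ is a fine, if optional, clarification.
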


Corollary \ref{corollary:neighborhoods} completes the second main step of the proof and gives us a way to study neighborhoods on $S^{d-1}$ instead of individual projections.  In particular, if $\{\theta_1,\cdots,\theta_k\}$ satisfy the conditions of Corollary \ref{corollary:neighborhoods}, then we have estimates on the error in the $k$ spherical balls of radius $\varepsilon\cdot\delta\cdot(1+\delta)^{-1}\cdot(L_\theta+L_\pi\cdot R)^{-1}$ centered at the $\theta_i$'s.  We now determine the $\theta_i$'s and use the \dkw\ bound in these directions to get our initial estimate on the error between the halfspace depth of a sample and the underlying distribution.  In particular,

\begin{proposition}\label{proposition:firstprobability}
Fix $R>1$, $\varepsilon>0$, and $\delta>0$.  Suppose that $\varepsilon\cdot\delta\cdot(1+\delta)^{-1}\cdot(L_\theta+L_\pi\cdot R)^{-1}<\arccos(d^{-1/2})$.  Then,
\begin{multline}\label{equation:Firstbound}
\pr\left(\sup_{q\in\mathbb{R}^d}|\hd(q;X)-\hd(q;X^{(n)})|\leq\varepsilon\right)\geq\\1-2C_2\left(\frac{(1+\delta)(L_\theta+L_\pi R)\sqrt{d}}{\varepsilon\delta}\right)^{d-1} (d-1)^{\frac{3}{2}}\ln(d)e^{-2n\varepsilon^2(1+\delta)^{-2}}-C_1 n R^{3d-5} e^{-\lambda\frac{R^2}{2}}.
\end{multline}
\end{proposition}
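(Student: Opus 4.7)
The plan is to bound the complementary probability $\pr\bigl(\sup_q |\hd(q;X)-\hd(q;X^{(n)})|>\varepsilon\bigr)$ by splitting the bad event into a tail event and a covering event, and then exploiting the work already done in Corollary \ref{corollary:neighborhoods}. By Proposition \ref{proposition:directions}, the conclusion $\sup_q|\hd(q;X)-\hd(q;X^{(n)})|\leq\varepsilon$ follows whenever $\sup_{t,\theta}|F_\theta(t)-F_{n,\theta}(t)|<\varepsilon$, so it suffices to control the latter.

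First, I would discard the samples that are too far from the origin. Let $A$ be the event that $\|X_i\|\leq R$ for all $i\leq n$. By the decay assumption and a union bound,
$$
\pr(A^c)\leq n\cdot C_1 R^{3d-5}e^{-\lambda R^2/2}.
$$
This accounts for the last term in \eqref{equation:Firstbound}.

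Next, I would set $\psi=\varepsilon\delta(1+\delta)^{-1}(L_\theta+L_\pi R)^{-1}$; the hypothesis guarantees $0<\psi<\arccos(d^{-1/2})$, so Lemma \ref{lemma:covering} yields a covering of $S^{d-1}$ by spherical balls of radius $\psi$ centered at directions $\theta_1,\dots,\theta_N$ with
$$
N\leq C_2\bigl(\sqrt{d}/\psi\bigr)^{d-1}(d-1)^{3/2}\ln(d).
$$
For each fixed $\theta_i$, the projected sample $d_{\theta_i}(X_1),\dots,d_{\theta_i}(X_n)$ is an iid one-dimensional sample with cdf $F_{\theta_i}$, so the \dkw\ inequality (Lemma \ref{lemma:DKW}) gives
$$
\pr\Bigl(\sup_{t\in\mathbb{R}}|F_{\theta_i}(t)-F_{n,\theta_i}(t)|\geq\varepsilon(1+\delta)^{-1}\Bigr)\leq 2e^{-2n\varepsilon^2(1+\delta)^{-2}}.
$$
A union bound over $i=1,\dots,N$ then bounds the probability of the event $B$ that some $\theta_i$ violates this one-dimensional inequality by $2N\cdot e^{-2n\varepsilon^2(1+\delta)^{-2}}$, which is exactly the middle term in \eqref{equation:Firstbound}.

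Finally, I would combine. On the event $A\cap B^c$, for any $\varphi\in S^{d-1}$ there is some $\theta_i$ in the covering with $|\theta_i-\varphi|<\psi$, so Corollary \ref{corollary:neighborhoods} applies and yields $\sup_t|F_\varphi(t)-F_{n,\varphi}(t)|<\varepsilon$. Hence on $A\cap B^c$ we have $\sup_{t,\varphi}|F_\varphi(t)-F_{n,\varphi}(t)|<\varepsilon$ and therefore $\sup_q|\hd(q;X)-\hd(q;X^{(n)})|\leq\varepsilon$ by Proposition \ref{proposition:directions}. Thus
$$
\pr\Bigl(\sup_q|\hd(q;X)-\hd(q;X^{(n)})|\leq\varepsilon\Bigr)\geq 1-\pr(A^c)-\pr(B),
$$
and substituting the bounds above gives \eqref{equation:Firstbound}.

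The proof is mostly a careful bookkeeping exercise since the technical work has already been done. The only real subtlety is making sure that the hypothesis $\psi<\arccos(d^{-1/2})$ is used precisely when invoking Lemma \ref{lemma:covering}, and that the covering-plus-DKW step is conditioned properly on the tail event $A$: the one-dimensional DKW bound is distribution-free and therefore valid unconditionally, so no conditioning is actually needed for $B$, but the passage from the discrete covering to all $\varphi\in S^{d-1}$ via Corollary \ref{corollary:neighborhoods} genuinely requires $A$. Keeping these two ingredients logically separated is the main thing to get right.
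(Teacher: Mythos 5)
Your proposal is correct and follows essentially the same route as the paper: tail bound via the decay assumption, a spherical covering from Lemma \ref{lemma:covering} with radius $\psi=\varepsilon\delta(1+\delta)^{-1}(L_\theta+L_\pi R)^{-1}$, DKW at the ball centers, and Corollary \ref{corollary:neighborhoods} to extend to all directions. The only cosmetic difference is that you present the final combination as an explicit union bound $\pr(A^c)+\pr(B)$ while the paper phrases it via $(1-a)(1-b)>1-(a+b)$; the content is identical.
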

\begin{proof}
Observe first, that, in each of the statements above, we require that for all $i\leq n$, $\|X_i\|\leq R$.  By the exponential decay property, for each $i$, the probability of this occurring is bounded below by $1-C_1R^{3d-5}e^{-\lambda\frac{R^2}{2}}$.

Next, observe that if $\theta\in S^{d-1}$ is such that $\sup_{t\in\mathbb{R}}|F_\theta(t)-F_{n,\theta}(t)|<\varepsilon\cdot (1+\delta)^{-1}$, then for all $\varphi\in S^{d-1}$ within a spherical ball of radius $\varepsilon\cdot\delta\cdot(1+\delta)^{-1}\cdot(L_\theta+L_\pi\cdot R)^{-1}$ satisfy the conditions of Corollary \ref{corollary:neighborhoods}.  Therefore, we cover the $(d-1)$-dimensional sphere with balls of radius $\varepsilon\cdot\delta\cdot(1+\delta)^{-1}\cdot(L_\theta+L_\pi\cdot R)^{-1}$.  By Lemma \ref{lemma:covering}, $C_2\cdot\left((1+\delta)(L_\theta+L_\pi R)\sqrt{d}\right)^{d-1}\cdot(\varepsilon\delta)^{1-d}\cdot (d-1)^{\frac{3}{2}}\cdot\ln(d)$ such spherical balls are required.  Fix $\theta$ to be the center of one of these spherical balls.  Applying the \dkw\ inequality, Lemma \ref{lemma:DKW}, it follows that
$$
\pr(\sup_{t\in\mathbb{R}}|F_\theta(t)-F_{n,\theta}(t)|\leq\varepsilon (1+\delta)^{-1})\geq 1-2e^{-2n\varepsilon^2(1+\delta)^{-2}}.
$$
Since the spherical balls cover $S^{d-1}$, for any $\varphi\in S^{d-1}$, there is some spherical ball with center $\theta$ such that $|\theta-\varphi|<\varepsilon\cdot\delta\cdot(1+\delta)^{-1}\cdot(L_\theta+L_\pi\cdot R)^{-1}$.  Hence, by Corollary \ref{corollary:neighborhoods}, it follows that $\sup_{t\in\mathbb{R}}|F_{\varphi}(t)-F_{n,\varphi}(t)|<\varepsilon$.

Finally, using the fact that for $a,b>0$, $(1-a)(1-b)>1-(a+b)$, and that there are $n$ sample points and $C_2\cdot\left((1+\delta)(L_\theta+L_\pi R)\sqrt{d}\right)^{d-1}\cdot(\varepsilon\delta)^{1-d}\cdot (d-1)^{\frac{3}{2}}\cdot\ln(d)$ spherical balls, the result follows.
\end{proof}

We now simplify the expression above by eliminating the parameters $R$ and $\delta$.  Since the bound in Inequality (\ref{equation:Firstbound}) has two exponentials, we can choose $R=\varepsilon\cdot\frac{2\sqrt{n}}{\sqrt{\lambda}(1+\delta)}$ to equate the exponentials.  This choice results in the following corollary:

\begin{corollary}\label{corollary:secondstep}
Fix $\varepsilon>0$ and $\delta>0$.  Suppose that $\varepsilon\cdot\delta\cdot(1+\delta)^{-1}\cdot\left(L_\theta+L_\pi\cdot \varepsilon\cdot\frac{2\sqrt{n}}{\sqrt{\lambda}(1+\delta)}\right)^{-1}<\arccos(d^{-1/2})$ and $2\varepsilon\sqrt{n}>\sqrt{\lambda}(1+\delta)$.  Then,
\begin{multline}\label{equation:Secondbound}
\pr\left(\sup_{q\in\mathbb{R}^d}|\hd(q;X)-\hd(q;X^{(n)})|\leq\varepsilon\right)\geq\\1-\left(2C_2\left(\frac{(L_\theta\sqrt{\lambda}(1+\delta)+2 L_\pi\sqrt{n}\varepsilon)\sqrt{d}}{\varepsilon\delta\sqrt{\lambda}}\right)^{d-1} (d-1)^{\frac{3}{2}}\ln(d)+C_1
\left(\frac{\varepsilon}{\sqrt{\lambda}(1+\delta)}\right)^{3d-5}n^{\frac{3}{2}(d-1)}\right)e^{-2n\varepsilon^2(1+\delta)^{-2}}.
\end{multline}
\end{corollary}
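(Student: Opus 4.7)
The plan is to apply Proposition \ref{proposition:firstprobability} directly with the specific choice $R=\varepsilon\cdot\frac{2\sqrt{n}}{\sqrt{\lambda}(1+\delta)}$ suggested in the paragraph preceding the statement, and then do the bookkeeping required to massage the resulting expression into the form written in the corollary.

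First, I would verify that the hypotheses of Proposition \ref{proposition:firstprobability} hold under the assumptions given. The condition $R>1$ in that proposition translates, for this choice of $R$, exactly into $2\varepsilon\sqrt{n}>\sqrt{\lambda}(1+\delta)$, which is the second assumption of the corollary. The spherical--radius hypothesis $\varepsilon\delta(1+\delta)^{-1}(L_\theta+L_\pi R)^{-1}<\arccos(d^{-1/2})$ becomes, after substituting for $R$, precisely the first assumption. So Proposition \ref{proposition:firstprobability} applies verbatim with this $R$.

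Next, I would check that the choice of $R$ does what it is supposed to: the exponent $-\lambda R^2/2$ in the second error term equals $-2n\varepsilon^2(1+\delta)^{-2}$, matching the exponent coming from the \dkw\ inequality, so the two exponential factors coincide and can be pulled out of a common bracket. Then the polynomial prefactors need only be simplified. For the decay term, $nR^{3d-5}=n\bigl(\tfrac{2\varepsilon\sqrt{n}}{\sqrt{\lambda}(1+\delta)}\bigr)^{3d-5}$, and the $n\cdot n^{(3d-5)/2}$ combines to $n^{(3d-3)/2}=n^{\frac{3}{2}(d-1)}$, yielding the stated polynomial growth in $n$; the constant $2^{3d-5}$ can be absorbed into $C_1$. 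For the covering term, I would expand $(1+\delta)(L_\theta+L_\pi R)=(1+\delta)L_\theta+2L_\pi\varepsilon\sqrt{n}/\sqrt{\lambda}$ and multiply top and bottom by $\sqrt{\lambda}$ to obtain $\bigl(L_\theta\sqrt{\lambda}(1+\delta)+2L_\pi\sqrt{n}\,\varepsilon\bigr)/\sqrt{\lambda}$, which is exactly the expression inside the $(d-1)$-th power in the corollary.

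I do not expect any real obstacle in this step. The argument is purely algebraic: substitute the specified $R$, equate the two exponentials, collect the common factor $e^{-2n\varepsilon^2(1+\delta)^{-2}}$, and tidy the polynomial prefactors. The only minor point to be careful about is that small numerical constants (such as $2^{3d-5}$) are implicitly absorbed into $C_1$ when writing the corollary in its stated form.
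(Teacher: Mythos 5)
Your proposal follows exactly the route the paper intends: substitute $R=\varepsilon\cdot\frac{2\sqrt{n}}{\sqrt{\lambda}(1+\delta)}$ into Proposition \ref{proposition:firstprobability}, verify that the hypotheses $R>1$ and the small-angle condition become the two assumptions of the corollary, and simplify the two prefactors; your algebra is correct. You also rightly flag the factor $2^{3d-5}$, which appears to have been dropped (or silently absorbed into $C_1$) in the paper's stated form of Inequality (\ref{equation:Secondbound}).
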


\noindent Finally, we choose $\delta=n^{-1}$ in order to eliminate $\delta$.  Additionally, this choice causes the exponential to simplify to $e^{-2n\varepsilon^2}$.  More precisely,

\begin{theorem}\label{Corollary:Final}
Fix $\varepsilon>0$ and suppose that $\varepsilon\cdot (n+1)^{-1}\cdot\left(L_\theta+L_\pi\cdot \varepsilon\cdot\frac{2n^{3/2}}{\sqrt{\lambda}(n+1)}\right)^{-1}<\arccos(d^{-1/2})$ and $2\varepsilon n^{\frac{3}{2}}>\sqrt{\lambda}(n+1)$.  Then,
\begin{multline}
\pr\left(\sup_{q\in\mathbb{R}^d}|\hd(q;X)-\hd(q;X^{(n)})|\leq\varepsilon\right)\geq\\1-\left(2C_2\left(\frac{(L_\theta\sqrt{\lambda}(n+1)+2 L_\pi n^{\frac{3}{2}}\varepsilon)\sqrt{d}}{\varepsilon\sqrt{\lambda}}\right)^{d-1} (d-1)^{\frac{3}{2}}\ln(d)+C_1 \left(\frac{\varepsilon n}{\sqrt{\lambda}(n+1)}\right)^{3d-5}n^{\frac{3}{2}(d-1)} \right)e^4e^{-2n\varepsilon^2}.
\end{multline}
\end{theorem}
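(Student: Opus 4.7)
The plan is to specialize Corollary \ref{corollary:secondstep} by the choice $\delta = n^{-1}$ and then track how each ingredient of that corollary transforms under this substitution. The choice of $\delta = n^{-1}$ is forced by the same balancing philosophy used to fix $R$ in the passage from Proposition \ref{proposition:firstprobability} to Corollary \ref{corollary:secondstep}: we want $\delta$ to tend to zero fast enough that the exponent $2n\varepsilon^2(1+\delta)^{-2}$ approaches $2n\varepsilon^2$ up to a bounded correction, but slowly enough that the covering-number prefactor $(\varepsilon\delta)^{1-d}$ is only polynomial in $n$.

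First I would verify that the two hypotheses of Corollary \ref{corollary:secondstep} translate verbatim into the two hypotheses of Theorem \ref{Corollary:Final}. With $\delta = n^{-1}$ one has $1+\delta = (n+1)/n$, giving $\delta(1+\delta)^{-1} = (n+1)^{-1}$ and $\sqrt{n}(1+\delta)^{-1} = n^{3/2}/(n+1)$. Substituting these identities into the hypotheses reproduces the two stated conditions with no further rearrangement.

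Next I would transform the two polynomial prefactors. For the covering term, the denominator $\varepsilon\delta\sqrt{\lambda}$ becomes $\varepsilon\sqrt{\lambda}/n$; absorbing the extra $n$ into the numerator converts the expression $L_\theta\sqrt{\lambda}(1+\delta) + 2L_\pi\sqrt{n}\,\varepsilon$ into $L_\theta\sqrt{\lambda}(n+1) + 2L_\pi n^{3/2}\varepsilon$, matching the theorem. The decay-term prefactor $\left(\varepsilon/(\sqrt{\lambda}(1+\delta))\right)^{3d-5}$ becomes $\left(\varepsilon n/(\sqrt{\lambda}(n+1))\right)^{3d-5}$ directly.

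The only nonroutine step, and the closest thing to an obstacle, is absorbing the distortion of the exponent into the advertised constant $e^4$. Writing $(1+1/n)^{-2} = n^2/(n+1)^2$, the gap between the exponents is
$$
2n\varepsilon^2 - \frac{2n^3\varepsilon^2}{(n+1)^2} = \frac{2n\varepsilon^2(2n+1)}{(n+1)^2},
$$
and the elementary inequality $n(2n+1) \leq 2(n+1)^2$, valid for all $n \geq 1$, bounds this gap by $4\varepsilon^2$. Since the probability statement is only informative when $\varepsilon \leq 1$, this yields $e^{-2n\varepsilon^2(1+\delta)^{-2}} \leq e^{4}\, e^{-2n\varepsilon^2}$, which produces the $e^4$ factor in front. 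Combining this with the prefactor transformations above gives the theorem.
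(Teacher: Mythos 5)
Your proposal is correct and matches the paper's proof: both substitute $\delta=n^{-1}$ into Corollary \ref{corollary:secondstep}, verify the hypotheses translate under the identities $\delta(1+\delta)^{-1}=(n+1)^{-1}$ and $\sqrt{n}(1+\delta)^{-1}=n^{3/2}/(n+1)$, and bound the exponent shift by $4\varepsilon^2\le4$ using $\varepsilon\le1$. The only cosmetic difference is that the paper rewrites $n^3/(n+1)^2$ as $n-\frac{2n^2+n}{(n+1)^2}$ before bounding the fraction by $2$, whereas you bound the gap $\frac{2n\varepsilon^2(2n+1)}{(n+1)^2}$ directly; these are the same estimate.
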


\begin{proof}
By setting $\delta=n^{-1}$ in Inequality (\ref{equation:Secondbound}) and expanding the exponential, the bound in Corollary \ref{corollary:secondstep} becomes 
\begin{multline*}
\pr\left(\sup_{q\in\mathbb{R}^d}|\hd(q;X)-\hd(q;X^{(n)})|\leq\varepsilon\right)\geq\\1-\left(2C_2\left(\frac{(L_\theta\sqrt{\lambda}(n+1)+2 L_\pi n^{\frac{3}{2}}\varepsilon)\sqrt{d}}{\varepsilon\sqrt{\lambda}}\right)^{d-1} (d-1)^{\frac{3}{2}}\ln(d)+C_1\left(\frac{\varepsilon n}{\sqrt{\lambda}(n+1)}\right)^{3d-5}n^{\frac{3}{2}(d-1)}\right)e^{-2\varepsilon^2\left(n-\frac{2n^2+n}{n^2+2n+1}\right)}.
\end{multline*}
Since $\varepsilon$ is a difference between two quantities whose values are between $0$ and $1$, only $\varepsilon\leq 1$ has content.  Moreover, the quotient in the exponential is decreases to $-2$ for $n$ positive; therefore, the exponent is bounded above by $-2n\varepsilon^2+4$, and the result follows.
\end{proof}

We summarize Theorem \ref{Corollary:Final} for $n$ sufficiently large.

\begin{corollary}\label{corollary:simplified}
Fix $\varepsilon>0$.  Then, for $n$ sufficiently large, there is a constant $C$ depending on $\lambda$, $\varepsilon$, $C_1$, $C_2$, $d$, and $L_\pi$ such that 
$$
\pr\left(\sup_{q\in\mathbb{R}^d}|\hd(q;X)-\hd(q;X^{(n)})|\leq\varepsilon\right)\geq1-Cn^{\frac{3}{2}(d-1)}e^{-2n\varepsilon^2}.
$$
\end{corollary}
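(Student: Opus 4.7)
The corollary is really a packaging of Theorem \ref{Corollary:Final}: what remains is to (i) absorb the two technical hypotheses on $n$ into the phrase ``for $n$ sufficiently large,'' and (ii) show that the parenthesized prefactor in front of $e^{-2n\varepsilon^2}$ is bounded by $Cn^{\frac{3}{2}(d-1)}$ for an appropriate constant $C$. My plan is to treat these two issues in turn.

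For the hypotheses, the inequality $2\varepsilon n^{3/2} > \sqrt{\lambda}(n+1)$ is equivalent to $n^{1/2} > \sqrt{\lambda}(1+n^{-1})/(2\varepsilon)$, which clearly holds once $n$ exceeds a threshold depending only on $\varepsilon$ and $\lambda$. For the angle condition, write $A_n = \varepsilon\cdot(n+1)^{-1}\cdot(L_\theta + L_\pi\varepsilon\cdot 2n^{3/2}/(\sqrt{\lambda}(n+1)))^{-1}$; as $n\to\infty$, the denominator grows like $2L_\pi\varepsilon n^{1/2}/\sqrt{\lambda}$, so $A_n = O(n^{-3/2})\to 0$, which eventually falls below $\arccos(d^{-1/2})$. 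Thus both hypotheses of Theorem \ref{Corollary:Final} hold for all $n$ past a threshold $n_0 = n_0(\varepsilon,\lambda,L_\pi,L_\theta,d)$.

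Once we apply Theorem \ref{Corollary:Final}, I would bound the prefactor term by term. The first term inside the parenthesis is
\[
2C_2\left(\frac{(L_\theta\sqrt{\lambda}(n+1)+2L_\pi n^{3/2}\varepsilon)\sqrt{d}}{\varepsilon\sqrt{\lambda}}\right)^{d-1}(d-1)^{3/2}\ln d.
\]
For $n$ large, $2L_\pi n^{3/2}\varepsilon$ dominates $L_\theta\sqrt{\lambda}(n+1)$, so the base of the power is $O(n^{3/2})$ with constant depending on $L_\pi$, $\varepsilon$, $\lambda$, and $d$; raising to the $(d-1)$-st power gives a contribution of order $n^{\frac{3}{2}(d-1)}$ (here $L_\theta$ disappears into the threshold and the constant, which is why the statement lists only $L_\pi$). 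The second term is $C_1(\varepsilon n/(\sqrt{\lambda}(n+1)))^{3d-5}n^{\frac{3}{2}(d-1)}$, whose first factor converges to $(\varepsilon/\sqrt{\lambda})^{3d-5}$ and is in particular bounded; hence this term is also $O(n^{\frac{3}{2}(d-1)})$ with constant depending on $C_1$, $\varepsilon$, $\lambda$, and $d$.

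Adding the two bounds, multiplying by the harmless $e^4$, and choosing $C$ large enough to dominate the sum for all $n\geq n_0$ yields the stated inequality. There is no real obstacle here: every step is bookkeeping, and the only mild subtlety is noting that $L_\theta$ is absorbed into the constant and the threshold $n_0$ rather than appearing in the dependence list, because its contribution is of lower order in $n$ than the $L_\pi n^{3/2}$ term that controls the asymptotics.
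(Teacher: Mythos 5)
Your proof is correct and follows exactly the route the paper intends: Corollary \ref{corollary:simplified} is stated in the paper merely as a summary of Theorem \ref{Corollary:Final}, and your write-up supplies the omitted bookkeeping (verifying both hypotheses hold for large $n$, bounding each term of the prefactor by $O(n^{\frac{3}{2}(d-1)})$, and noting that $L_\theta$ is absorbed into the threshold $n_0$ rather than the constant $C$).
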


We can compare this result with the known convergence rates in Proposition \ref{proposition:VC}.  The exponential in Inequality (\ref{inequality:1}) is much larger (the exponent is a negative number with smaller magnitude) than what appears in Corollary \ref{corollary:simplified}, so our bound is tighter.  On the other hand, the best coefficient in Inequality (\ref{inequality:2}) is $n^{2d+2}$, which is a factor of $n^{\frac{1}{2}d+\frac{7}{2}}$ times larger than our bound.

\section{Examples}\label{Section:Examples}

In this section, we apply Theorem \ref{Corollary:Final} to several examples.  We take special care to develop explicit bounds on the Lipschitz constants whenever possible.

\begin{example}\label{example:ellipticallysymmetric}
Consider the case where $X$ is distributed according to an elliptically symmetric distribution with pdf $f(x)=\det(\Sigma)^{-\frac{1}{2}}\psi((x-\mu)^T\Sigma^{-1}(x-\mu))$ where $x$ and $\mu$ are in $\mathbb{R}^d$, $\Sigma$ is a positive definite symmetric matrix, and $\psi:[0,\infty)\rightarrow[0,\infty)$ (see, for example, \cite{Chmielewski:1981}).  Since $\Sigma$ is positive definite, we can decompose $\Sigma=QD^2Q^T$ where the entries of $Q$ are the orthogonal eigenvectors of $\Sigma$ and $D$ is a diagonal matrix whose entries are the square roots of the eigenvalues of $\Sigma$.

Consider the affine transformation $Y=D^{-1}Q^T(X-\mu)$; under this transformation, the pdf of $Y$ becomes $\psi(\sum y_i^2)$.  Recall that since halfspace depth is invariant under affine transformations (see, for example, \cite{ZuoSerfling:2000}).  Therefore,  $$\sup_{q\in\mathbb{R}^d}|\hd(q;X)-\hd(q;X^{(n)})|=\sup_{q\in\mathbb{R}^d}|\hd(q;Y)-\hd(q;Y^{(n)})|,$$ and we can study the convergence of $X$ by studying the convergence of $Y$.  Throughout the remainder of this example all statements refer to $Y$.  

We suppose, additionally, that $\psi$ has the following properties:
\begin{enumerate}
\item $\int_0^\infty r^{d-1}\psi(r^2)dr=(\operatorname{Vol}_{d-1}(S^{d-1}))^{-1}$.  This condition guarantees that the integral of the pdf for $Y$ over $\mathbb{R}^d$ is $1$.\label{Condition:Probability}
\item There exists a $\lambda>0$ such that $\int_R^\infty r^{d-1}\psi(r^2)dr=O(R^{3d-5}e^{-\lambda\frac{R^2}{2}})$.  This condition guarantees that $X$ has a decay rate of $\lambda$.\label{Condition:Decay}
\item The function $\Psi(t)=\int_{\mathbb{R}^{d-1}}\psi(t^2+x_1^2+\cdots+x_{d-1}^2)dx_1\cdots dx_{d-1}$ is bounded.  This function is the derivative of $F_{e_d}(t)$, where $e_d$ is the north pole of $S^{d-1}$.  The upper bound on this function is a Lipschitz constant for $F_{e_d}(t)$.  When Conditions (\ref{Condition:Probability}) and (\ref{Condition:Decay}) hold, this is not a strong condition; for example, it holds when $\psi$ is bounded.\label{Condition:Lipschitz}
\end{enumerate}

Like all distributions which are spherically symmetric about the origin, for $Y$, $L_\theta=0$.  Moreover, by Condition (\ref{Condition:Decay}), $Y$ has a decay rate of $\lambda$.  Finally, since this is a spherically symmetric distribution, $F_\theta(t)$ is independent of $\theta$.  By Condition \ref{Condition:Lipschitz}, $F'_{e_d}(t)=\int_{\mathbb{R}^{d-1}}\psi(t^2+x_1^2+\cdots+x_{d-1}^2)dx_1\cdots dx_{d-1}$ is bounded.  Therefore, the projection Lipschitz constant is $L_\pi=\sup_{t\in\mathbb{R}}\left|\int_{\mathbb{R}^{d-1}}\psi(t^2+x_1^2+\cdots+x_{d-1}^2)dx_1\cdots dx_{d-1}\right|$.
\end{example}

In particular cases, we can derive more precise bounds and constants.

\begin{example}
Consider the case where $X$ is distributed according to a non-degenerate normal distribution in $\mathbb{R}^d$ with mean $\mu$ and (positive definite) covariance matrix $\Sigma$.  By applying Example \ref{example:ellipticallysymmetric}, it is enough to consider the standard normal distribution in $\mathbb{R}^d$ centered at the origin with covariance matrix $I$.  In this case, the pdf of $X$ is $(2\pi)^{-d/2}e^{-\frac{1}{2}\sum x_i^2}$.  Moreover, via spherical integration it follows that $\pr(|X|>R)=O(R^{d-2}e^{-\frac{R^2}{2}})$, so the decay rate of $X$ is $1$.  Additionally, for any $\theta$, $F_\theta$ is the cdf of a standard normal distribution in one variable, since the Lipschitz constant for $F_\theta$ is bounded by the absolute value of the derivative, $L_\pi=\frac{1}{\sqrt{2\pi}}$.  

The computation for $C_1$ in Definition \ref{definition:decaysquickly} is more technical (not particularly interesting); we observe that since $\pr(|X|>R)=O(R^{d-2}e^{-\frac{R^2}{2}})$, we could, in fact, replace the $3d-5$ by $d-2$ in Definition \ref{definition:decaysquickly} and subsequent computations to achieve a better bound with $C_1$ appearing only in a lower order term.  Therefore, we choose to ignore the $C_1$ term.  We leave the details to the interested reader.
\end{example}

\begin{example}
For the two-dimensional normal, we can say even more.  More precisely, assume that $X$ is distributed according to a bivariate normal centered at the origin with covariance $I$.  The bounds above apply, but can be made even sharper.  For example, we can cover a circle using at most $\frac{\pi}{w}+1$ intervals of width $2w$.  Moreover, $\pr(|X|>R)=e^{-\frac{1}{2}R^2}$.  Therefore, in this case, the bound in Theorem \ref{Corollary:Final} has a smaller constant, and, explicitly, the bound becomes
$$
1-\left(2\sqrt{2\pi}n^{\frac{3}{2}}+n+2\right)e^4e^{-2n\varepsilon^2}.
$$

Additionally, in the two-dimensional case, the function $m$ for Proposition \ref{proposition:VC} can be computed explicitly.  The largest number of subsets of a set of size $n$ occurs when the points of $n$ are in convex position.  In this case, there are $n^2-n+2$ subsets formed from intersections with halfspaces, so $m(r)=r^2-r+2$.  Even with this smaller degree polynomial for $m$, our bounds are still an improvement by a factor of $\sqrt{n}$.
\end{example}

\section{Conclusion}\label{section:Conclusion}

The results in this paper illustrate how, using the geometry and topology of halfspace depth and $\mathbb{R}^d$, one can achieve better convergence bounds for the sample version of halfspace depth, as compared to general \gc\ bounds.  With improved bounds in this paper, we have improved estimates on the quality of the halfspace median statistic (see \cite{LiuSingh:Survey:1999} and \cite{Aloupis:2006}), which is applicable in statistics.  The approach in this paper is related to the ideas of the projection pursuit in \cite{Diaconis:1984} and \cite{Meckes:2009}; it is possible that incorporating such techniques may further improve the convergence rates of halfspace depth; we leave such improvements as future work.

The authors would like to thank their colleagues at Clemson University, in particular, Billy Bridges, Brian Fralix, Peter Kiessler, and June Luo for their constructive feedback on earlier versions of this work.

\bibliographystyle{plain}
\bibliography{Convergence}

\end{document}